\documentclass{amsart}
\usepackage{amssymb,amsmath,color}
\usepackage{color}
\usepackage{tikz}
\usepackage{tkz-graph}
\usetikzlibrary{calc, positioning}
\usepackage{pgf}
\usepackage{bm}
\usepackage{pgfplots}
 \usepackage[left=1in,right=1in,top=0.9in,bottom=0.9in]{geometry}

\def \hom {\operatorname{hom}}

\newcommand{\RR}{\mathbb{R}}

\newcommand{\NN}{\mathbb{N}}

\newcommand{\U}{\mathcal{U}}
\newcommand{\xx}{\mathbf{x}}
\newcommand{\yy}{\mathbf{y}}

\newtheorem{theorem}{Theorem}[section]

\newtheorem{lemma}[theorem]{Lemma}
\newtheorem{corollary}[theorem]{Corollary}

\theoremstyle{definition}
\newtheorem{definition}[theorem]{Definition}

\theoremstyle{remark}
\newtheorem{remark}[theorem]{Remark}

\title{Ubiquity of power sums in graph profiles}
\thanks{Grigoriy Blekherman was partially supported by NSF grant DMS-1901950. Annie Raymond was partially supported by NSF grant DMS-2054404.}

\author{Grigoriy Blekherman}
\address{School of Mathematics, Georgia Institute of Technology,
686 Cherry Street
Atlanta, GA 30332}\email{greg@math.gatech.edu}

\author{Annie Raymond}
\address{Department of Mathematics and Statistics,
Lederle Graduate Research Tower, 1623D,
University of Massachusetts Amherst
710 N. Pleasant Street
Amherst, MA 01003} \email{raymond@math.umass.edu}

\begin{document}

\maketitle

\begin{abstract}
Graph density profiles are fundamental objects in extremal combinatorics. Very few profiles are fully known, and all are two-dimensional. We show that even in high dimensions \emph{ratios} of graph densities and numbers often form the power-sum profile (the limit of the image of the power-sum map) studied recently 
by Acevedo, Blekherman, Debus and Riener. Our choice of graphs is motivated by recent work by Blekherman, Raymond and Wei on undecidability of polynomial inequalities in graph densities. While the ratios do not determine the complete density profile, they contain high-dimensional information. For instance, to reconstruct the density profile of $4k$-cycles from our results, one needs to solve only one-parameter extremal problems, for any number of $4k$-cycles.
\end{abstract}

\section{Introduction}The \emph{number of homomorphisms} from a graph $H$ to a graph $G$, denoted by $\hom(H;G)$, is the number of maps from $V(H)$ to $V(G)$ that yield a graph homomorphism, i.e., that map every edge of $H$ to an edge of $G$. The \emph{homomorphism density} from a graph $H$ to a graph $G$, denoted as $t(H;G):=\frac{\hom(H;G)}{|V(G)|^{|V(H)|}}$, is the probability that a random map from $V(H)$ to $V(G)$ yields a graph homomorphism. 
Many important problems and results in extremal graph theory can be framed as certifying the validity of polynomial inequalities in homomorphism numbers or densities which are valid on all graphs. For example, the Goodman bound \cite{goodman} (which implies Mantel's theorem \cite{mantel}) states that $\hom(K_1;G)\hom(K_3;G) \geq 2\hom(K_2;G)^2-\hom(K_2;G)\hom(K_1;G)^2$,
and Sidorenko's conjecture \cite{Sid93} can be stated as
$t(H;G) \geq  t(K_2;G)^{|E(H)|}$ for any bipartite graph $H$.

Understanding all $s$-tuples that can occur as either homomorphism numbers or densities for a fixed collection of graphs $\U=\{H_1,\dots,H_s\}$ is an extremely complicated problem. It is essentially equivalent to understanding all polynomial inequalities in homomorphism densities or numbers of the graphs $H_i$ which are valid on all target graphs $G$. It is known that the problem of checking whether a polynomial expression in either numbers or densities is nonnegative on all graphs is undecidable \cite{IR, HN11}.

 We call the set of all $s$-tuples the \emph{number} (resp. \emph{density}) \emph{(graph) profile} of the collection $\U$. To the best of our knowledge, full descriptions of all $s$-tuples are only known for pairs of graphs, and even then in a very limited number of cases; no higher-dimensional profiles are known. For instance an important result of Razborov \cite{RazTriangle} completely describes the density profile of $\mathcal{U}=\{K_2,K_3\}$. This was extended by Nikiforov to $\U=\{K_2,K_4\}$ in \cite{Nikiforov}, and generalized by Reiher to $\U=\{K_2,K_n\}$ in \cite{Reiher}.

In this paper, instead of computing number or density profiles, we compute some higher-dimensional profiles of \emph{ratios} of densities and numbers. We show that these ratio profiles are direct products of the \emph{power-sum profile} which was studied in \cite{abdr} under the name limit Vandermonde cell.

\begin{definition}
For $i\in \NN$ and $\xx\in \RR^n$, we let $p_i(\xx)=\sum_{j=1}^n x_j^i$ be the \emph{$i$-th power sum polynomial}. Let $\Pi_{n,\ell}:=\{(p_2(\xx), p_3(\xx), \ldots, p_\ell(\xx)) \mid \xx\in \RR^n_{\geq 0} \textup{ and } p_1(\xx)=1\}$. Let $\Pi_\ell:=\textup{cl}\left(\bigcup_{n\in \NN} \Pi_{n,\ell}\right)$. We call $\Pi_\ell$ the \emph{power-sum profile}.  
\end{definition}

Now we describe in detail the ratio profiles that we characterize. Let $N_{c,q}$ denote the cycle of length $c$ where every edge is replaced by a clique of size $q$. In particular, $N_{c,2}$ is simply the cycle of length $c$, denoted as $C_c$.  It was shown in \cite{BRW} that testing the validity of polynomial inequalities in densities of graphs $N_{4i,q}$ is undecidable. The proof made use of some geometric properties of the profiles of ratios of densities, which we now explore fully.

In Section~\ref{sec:necklaces}, we compute the density ratio profile $\mathcal{N}_{4\ell, \leq r}$ recording the closure of the points \small{$$\left(\frac{t(N_{8,2};G)}{(t(N_{4,2};G))^2}, \frac{t(N_{12,2};G)}{(t(N_{4,2};G))^3}, \ldots, \frac{t(N_{4\ell,2};G)}{(t(N_{4,2};G))^\ell}, \ldots, \frac{t(N_{8,r};G)}{(t(N_{4,r};G))^2}, \frac{t(N_{12,r};G)}{(t(N_{4,r};G))^3}, \ldots, \frac{t(N_{4\ell,r};G)}{(t(N_{4,r};G))^\ell}\right)$$}\normalsize{for} graphs $G$ such that $t(N_{4,q};G)\neq 0$ for $2\leq q\leq r$. We show that this profile is equal to  $\Pi_\ell^{r-1}$, the direct product of $r-1$ copies of the power-sum profile $\Pi_\ell$.

Thus we also recover the density ratio profile $$\mathcal{C}_{4\ell}:=\textup{cl}\left(\left\{\left(\frac{t(C_8;G)}{(t(C_4;G))^2}, \frac{t(C_{12};G)}{(t(C_4;G))^3}, \ldots, \frac{t(C_{4\ell};G)}{(t(C_4;G))^\ell}\right) \mid G \textup{ is a graph s.t.\ } t(C_{4};G)\neq 0\right\}\right)$$ which is equal to $\Pi_\ell$. 
This allows us to understand the (usual) density profile $$\mathcal{C}^*_{4\ell}:=\textup{cl}\left(\left\{\left(t(C_4,G), t(C_8;G), \ldots, t(C_{4\ell};G)\right) \mid G \textup{ is a graph}\right\}\right)$$ up to one dimension because we divided out by $t(C_4;G)$. To obtain the actual profile $\mathcal{C}^*_{4\ell}$, we need to compute the fiber (preimage) over any point $\mathbf{a}=(a_2,\dots, a_\ell)\in\Pi_\ell$ which can be done by finding $$w_{\mathbf{a}}:=\max_G \{t(C_4;G) \mid t(C_{4i};G)=a_i \ \forall 2 \leq i \leq \ell\},$$ i.e., the maximal $C_4$-density that achieves the given point $\mathbf{a}$. Then the fiber over $\mathbf{a}$ consists of all points $(t, a_2t^2,a_3t^3,\dots, a_\ell t^{\ell}) $ with $0\leq t\leq w_{\mathbf{a}}$. Indeed, by our definition of $w_{\mathbf{a}}$, $t=w_{\mathbf{a}}$ is realizable (as a limit) by some sequence of graphs $G_n$ increasing in size. Taking the disjoint union of $G_n$ with some disconnected vertices, we can realize all points with smaller density $t$. The same ideas carry on exactly to the density ratio profile $\mathcal{N}_{4\ell, q}$ defined as $$\textup{cl}\left(\left\{\left(\frac{t(N_{8,q};G)}{(t(N_{4,q};G))^2}, \frac{t(N_{12,q};G)}{(t(N_{4,q};G))^3}, \ldots, \frac{t(N_{4\ell,q};G)}{(t(N_{4,q};G))^\ell}\right) \mid G \textup{ is a graph s.t.\ } t(N_{4,q};G)\neq 0\right\}\right)$$ for some fixed $q\geq 2$. For the most general density ratio profile $\mathcal{N}_{4\ell,\leq r}$, since we divide by $r-1$ separate quantities, we recover the (usual) density profile  $$\mathcal{N}^*_{4\ell,\leq r}:=\textup{cl}\left(\left\{\left(t(N_{4,2};G), t(N_{8,2};G), \ldots,(N_{4\ell,2};G), \ldots t(N_{4,r};G), t(N_{8,r};G), \ldots,(N_{4\ell,r};G)\right) \mid G \textup{ is a graph}\right\}\right)$$ up to $r-1$ dimensions; see Remark~\ref{rem:lossofdimensionsgeneral} for details. Note that we are losing a fixed number of dimensions even though the dimension of the profile can get arbitrarily large. 

 In Section~\ref{sec:hyperstars}, we also compute the profile recording the closure of the points $$\left(\frac{\hom(S^{(k)}_{2k};G)}{(\hom(S^{(k)}_{k};G))^2}, \frac{\hom(S^{(k)}_{3k};G)}{(\hom(S^{(k)}_{k};G))^3}, \ldots, \frac{\hom(S^{(k)}_{\ell k};G)}{(\hom(S^{(k)}_{k};G))^\ell}\right)$$ for $k$-uniform hypergraphs $G$ such that $\hom(S^{(k)}_{k};G)\neq 0$ and where $S^{(k)}_{b}$ is the $k$-uniform hyperstar with $b$ branches that intersect in a single vertex and nowhere else. Note that when $k=2$, $S^{(k)}_b$ is the usual $b$-star graph, i.e., $K_{1,b}$.  Here, because we have a number ratio profile, the usual number profile cannot almost be recovered as discussed in the density ratio profile case above. However, interestingly, we again obtain that this profile is equal to $\Pi_\ell$, the power-sum profile. 

The connection of density profiles of cycles to the power-sum profile comes from considering adjacency matrices. It is well-known that $\operatorname{hom} (C_k,G)=\operatorname{trace} A^k_G=\lambda_1^k+\dots+\lambda_n^k$, where $A_G$ is the adjacency matrix of $G$ and $\lambda_1,\dots,\lambda_n$ are its eigenvalues. A similar connection works for graphs $N_{c,q}$ for $q>2$ where, instead of $A_G$, we use a modified adjacency matrix. Finally, for stars, we have $\operatorname{hom} (S_k,G)=d_1^k+\dots+d_n^k$ where $S_k$ is the star graph with $k$ leaves, and  $d_i$ are the degrees of vertices of $G$. A similar formula holds for hyperstars and uniform hypergraphs;  see Section \ref{sec:hyperstars} for details. It is interesting to note that, while eigenvalues of adjacency matrices and degree sequences of graphs and hypergraphs are special, once we consider graphs of all sizes, we pick up the entirety of the power-sum profile. 


\section{power-sum profile}\label{sec:profiles}

The power-sum profile has intricate and complicated geometry, which is key to proving undecidability results. We recall some geometric results from \cite{abdr}. While these results are not needed for our proofs, they demonstrate the interesting geometry of graphs profiles after applying the ratio map.

First recall the following definition from the introduction. 

\begin{definition}
For $i\in \NN$ and $\xx\in \RR^n$, we let $p_i(\xx)=\sum_{j=1}^n x_j^i$ be the \emph{$i$-th power sum polynomial}. Let $\Pi_{n,\ell}:=\{(p_2(\xx), p_3(\xx), \ldots, p_\ell(\xx)) \mid \xx\in \RR^n_{\geq 0} \textup{ and } p_1(\xx)=1\}$. Let $\Pi_\ell:=\textup{cl}\left(\bigcup_{n\in \NN} \Pi_{n,\ell}\right)$. We call $\Pi_\ell$ the \emph{power-sum profile}.  
\end{definition}

Recall that the \emph{cyclic polytope} $C(n,\ell)$ for $n>\ell\geq 2$ is the convex hull of $n$ points on the real $\ell$-dimensional moment curve $(t, t^2, \ldots, t^\ell)$. In \cite{abdr}, the authors showed that $\Pi_{n,\ell}$ has the combinatorial structure of a cyclic polytope, and that $\Pi_\ell$ has the combinatorial structure of an infinite cyclic polytope. More precisely, they proved the following results.

\begin{theorem}[Theorem 2.4 in \cite{abdr}]
For integers $n\geq \ell$, the boundary of $\Pi_{n,\ell}$ is the set of points $(p_2(\xx), p_3(\xx), \ldots, p_\ell(\xx))$ such that $\xx\in \RR^n_{\geq 0}$ and $p_1(\xx)=1$ that are of the following two types:
\begin{enumerate}
\item $(\underbrace{0,\ldots, 0}_{r_0}, \underbrace{x_1}_{r_1}, \underbrace{x_2, \ldots, x_2}_{r_2}, \ldots, \underbrace{x_{\ell-1}, \ldots, x_{\ell-1}}_{r_{\ell-1}})$ with $r_{2k-1}=1$ and $r_0\geq 0$, $r_{2k}\geq 1$ for all $k$,
\item $(\underbrace{x_1, \ldots, x_1}_{r_1}, \underbrace{x_2}_{r_2}, \ldots, \underbrace{x_{\ell-1}, \ldots, x_{\ell-1}}_{r_{\ell-1}})$ with $r_{2k}=1$ and $r_{2k-1}\geq 1$ for all $k$, 
\end{enumerate}
and $0\leq x_1 \leq x_2 \leq \ldots \leq x_{\ell-1}$. 
\end{theorem}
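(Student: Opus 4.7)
The plan is to identify $\partial \Pi_{n,\ell}$ with the image of Lagrangian critical points of $\phi(\xx) = (p_2(\xx), \ldots, p_\ell(\xx))$ restricted to the compact simplex $\Delta_n = \{\xx \in \RR^n_{\geq 0} : p_1(\xx) = 1\}$, and to read off the allowed multiplicity patterns from the roots of a single auxiliary polynomial. Since $\Delta_n$ is compact, so is $\Pi_{n,\ell} = \phi(\Delta_n)$, and every boundary point extremizes some linear functional $L_{\mathbf{c}}(\yy) = \sum_{k=2}^{\ell} c_k y_k$ on $\Pi_{n,\ell}$; equivalently, any preimage $\xx$ of a boundary point extremizes $L_{\mathbf{c}} \circ \phi$ on $\Delta_n$.

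The first-order (KKT) conditions with respect to the constraint $p_1 = 1$ and the nonnegativity constraints $x_i \geq 0$ show that every positive coordinate $x_j > 0$ must be a root of
\[
q(t) = \sum_{k=2}^{\ell} k c_k\, t^{k-1} - \lambda,
\]
a polynomial of degree at most $\ell - 1$, for some common Lagrange multiplier $\lambda$. Hence the positive values among $\xx$ comprise at most $\ell - 1$ distinct reals $x_1 < x_2 < \cdots < x_s$. Top-dimensional boundary faces of $\Pi_{n,\ell}$ correspond to $s = \ell - 1$, in which case $q(t) = a \prod_{i=1}^{\ell - 1}(t - x_i)$ for some nonzero $a$.

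The technical heart is the second-order analysis that pins down the possible multiplicity patterns. For any perturbation of $\xx$ by a vector $(\varepsilon_j)$ supported on positive coordinates and satisfying $\sum_j \varepsilon_j = 0$ (so as to preserve $p_1 = 1$ and the active set), the first-order variation of $L_{\mathbf{c}}\circ\phi$ vanishes by the Lagrange relation, and the second-order variation equals
\[
\tfrac{1}{2} \sum_{j : x_j > 0} q'(x_j)\, \varepsilon_j^2.
\]
For a minimum (respectively a maximum) this forces $q'(x_j) \geq 0$ (respectively $\leq 0$) whenever the coordinate $x_j$ appears with multiplicity at least two. Because $x_1 < \cdots < x_{\ell - 1}$ are the simple positive roots of $q$, one computes $\operatorname{sign} q'(x_j) = \operatorname{sign}(a) \cdot (-1)^{\ell - 1 - j}$, which strictly alternates with $j$. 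So for each optimization direction, exactly one parity of indices can carry multiplicities greater than one, yielding the two alternating patterns in (1) and (2). Finally, the nonnegativity KKT inequality at $x_i = 0$ requires $q(0) \geq 0$ for a minimum and $q(0) \leq 0$ for a maximum; a direct computation gives $\operatorname{sign} q(0) = \operatorname{sign}(a) \cdot (-1)^{\ell - 1}$, which is compatible with the presence of zero coordinates precisely in case (1) and incompatible in case (2). Realizability of each pattern is established by the reverse construction: given desired distinct values and multiplicities, define $q$ by its roots and verify that the resulting $\xx$ is a constrained extremum.

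The main obstacle, in my view, is the careful bookkeeping of all mass-transport perturbations in the second-order expansion, including cross-cluster moves that couple coordinates sitting at different $x_i$; the clean quadratic form $\tfrac{1}{2}\sum q'(x_j)\varepsilon_j^2$ arises only after using the Lagrange relation to cancel terms that would otherwise mix distinct clusters. This sign-alternation argument is the power-sum analogue of the Gale evenness condition that governs faces of cyclic polytopes, which is ultimately why $\Pi_{n,\ell}$ combinatorially resembles $C(n,\ell)$.
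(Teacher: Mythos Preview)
The paper does not prove this statement. It is quoted as Theorem~2.4 of \cite{abdr} in the section on the power-sum profile, with the explicit caveat that ``these results are not needed for our proofs''; no argument is supplied or sketched. There is therefore nothing in the present paper to compare your proposal against.

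For what it is worth, your Lagrange/KKT outline is the natural approach and captures the right mechanism: positive coordinates of a boundary preimage must be roots of a degree-$(\ell-1)$ polynomial $q$, and the alternating sign of $q'$ at consecutive simple roots forces the alternating multiplicity pattern, which is exactly the Gale-evenness analogy you point out at the end. One step to tighten if you carry this through: the opening assertion that ``every boundary point extremizes some linear functional'' is not automatic, since $\Pi_{n,\ell}$ is not convex. What is true, and what your argument actually uses, is that any preimage $\xx$ of a boundary point lying in the relative interior of $\Delta_n$ must be a singular point of $\phi$, i.e.\ $d\phi_\xx$ fails to surject from $T_\xx\Delta_n$ onto $\RR^{\ell-1}$; this is what produces the nonzero covector $c$. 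The second-order step should correspondingly be read as a one-sidedness condition on the image near $\phi(\xx)$ rather than as a genuine local extremum of $L_c\circ\phi$ on $\Delta_n$; the diagonal Hessian computation $\tfrac{1}{2}\sum q'(x_j)\varepsilon_j^2$ and the sign conclusions you draw are unchanged under that reading. The reverse inclusion (that every $\xx$ of type~(1) or~(2) actually maps to $\partial\Pi_{n,\ell}$) also needs a bit more than ``define $q$ by its roots'': one must check that no other preimage of $\phi(\xx)$ is a regular interior point.
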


\begin{theorem}[Theorem 3.1 in \cite{abdr}]
The set $\Pi_{n,\ell}$ has the combinatorial structure of the cyclic polytope $C(n,\ell-1)$, i.e., there is a homeomorphism $\textup{bd } C(n,\ell-1) \rightarrow \textup{bd } \Pi_{n,\ell}$ that is a diffeomorphism when restricted to the relative interior of any face of $\textup{bd }C(n,\ell-1)$. 
\end{theorem}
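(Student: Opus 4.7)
The plan is to build the claimed homeomorphism face-by-face, exploiting the boundary parameterization of Theorem 2.4 together with an explicit moment-curve realization of the vertices of $\Pi_{n,\ell}$. The first step is to identify the $n$ vertices: for $m = 1, \ldots, n$, set $\xx^{(m)} := \tfrac{1}{m}(e_1+\cdots+e_m)$, so $p_1(\xx^{(m)}) = 1$ and $p_i(\xx^{(m)}) = m^{1-i}$. The images $v_m := (m^{-1}, m^{-2}, \ldots, m^{-(\ell-1)})$ are $n$ distinct points on the moment curve $t \mapsto (t, t^2, \ldots, t^{\ell-1})$ at parameters $t = 1/m$, and any $\ell$ of them are affinely independent by a Vandermonde determinant. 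These $n$ points will serve as the images, under the homeomorphism, of the $n$ vertices of $C(n, \ell-1)$.

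Next I would build a combinatorial dictionary between the multiplicity patterns of Theorem 2.4 and the Gale-admissible subsets of $[n]$ indexing facets of $C(n, \ell-1)$. Given a pattern, list the multiplicities of its strictly positive values as $(m_1, \ldots, m_s)$ in order of increasing value, form the reverse cumulative sums $M_j := m_j + m_{j+1} + \cdots + m_s$, and define $S := \{M_1, \ldots, M_s\} \subseteq [n]$. The key observation is that the parity constraints of Theorem 2.4 --- odd-indexed $r$'s equal $1$ in type (1), even-indexed $r$'s equal $1$ in type (2) --- are precisely what forces $S$ to satisfy Gale's evenness condition (spot-checks for $n=\ell=3$ and $n=5,\ell=4$ give $S=\{1,2,3\},\{1,2,5\},\{1,3,4\},\{1,4,5\},\{2,3,5\},\{3,4,5\}$, matching the six Gale-admissible subsets of $[5]$). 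Checking that every Gale-admissible $S$ arises from exactly one pattern then yields a bijection between boundary pieces of $\Pi_{n,\ell}$ and facets of $C(n, \ell-1)$.

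With the bijection in hand, for each facet $F$ of $C(n, \ell-1)$ I would parameterize both sides explicitly: the matching boundary piece $\mathcal{F} \subseteq \textup{bd}(\Pi_{n,\ell})$ by the values $0 \leq x_1 \leq \cdots \leq x_s$ subject to $\sum_j m_j x_j = 1$ (a simplex of dimension $s - 1 = \ell - 2$), and $F$ by the barycentric coordinates of $\{v_{M_j}\}$. Define $\phi: F \to \mathcal{F}$ by matching vertices (each vertex of $\mathcal{F}$ is reached by a degeneration $x_i \to x_{i+1}$ or $x_1 \to 0$ of the pattern) and extend via the power-sum map. The relevant Jacobian is Vandermonde-like, hence nonvanishing on the relative interior, so the map is a diffeomorphism there. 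On intersections of adjacent facets the pieces agree because their shared subface corresponds to a common degeneration of patterns, so the face-wise maps glue into a global homeomorphism $\textup{bd}(C(n,\ell-1)) \to \textup{bd}(\Pi_{n,\ell})$.

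The main obstacle is the combinatorial matching in the second step: proving that the two families of patterns are in bijection with the Gale-admissible facet sets of $C(n, \ell-1)$, with types (1) and (2) covering complementary parity classes of facets. The constraint $r_{2k-1} = 1$ or $r_{2k} = 1$ produces exactly the right parity of run lengths between complement elements, but tracking the dependence on leading zeros ($r_0$) and on whether $\ell - 1$ is even or odd requires careful bookkeeping. Once that dictionary is established, the smoothness in Step 3 reduces to a Vandermonde nonvanishing, and compatibility across adjacent faces follows from matching parameterizations at the degenerate values.
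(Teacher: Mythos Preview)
The paper does not prove this statement. It is quoted verbatim from \cite{abdr} as background in Section~2, and the very next sentence of the paper reads ``An explicit homeomorphism is given in Section~3 of \cite{abdr}'' --- the authors defer entirely to the cited source. None of the paper's own results (Theorems~\ref{thm:cycles}, \ref{thm:necklacessingleq}, \ref{thm:necklaces}, \ref{thm:stars}) depend on this structural theorem; it is included only to convey the geometry of $\Pi_\ell$. There is therefore no proof in the paper to compare your proposal against.

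As to the proposal itself: the identification of the vertices as the moment-curve points $v_m=(m^{-1},\dots,m^{-(\ell-1)})$ is correct and is indeed the natural starting point. Your Step~2, translating the multiplicity patterns of Theorem~2.4 into Gale-admissible index sets via reverse cumulative sums, is a plausible dictionary, but you yourself flag it as the ``main obstacle'' and support it only with spot-checks; as written it is an outline, not a proof. Likewise, the gluing argument in Step~3 asserts compatibility across adjacent facets without verifying that the two parameterizations agree on the overlap (the degenerations $x_i\to x_{i+1}$ and $x_1\to 0$ must be shown to land on the \emph{same} lower-dimensional pattern from both sides, with matching orientations). If your goal is to reconstruct the proof, you should consult Section~3 of \cite{abdr} directly, since that is where the argument lives; the present paper offers no independent route.
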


\begin{figure}
\includegraphics[scale=0.2]{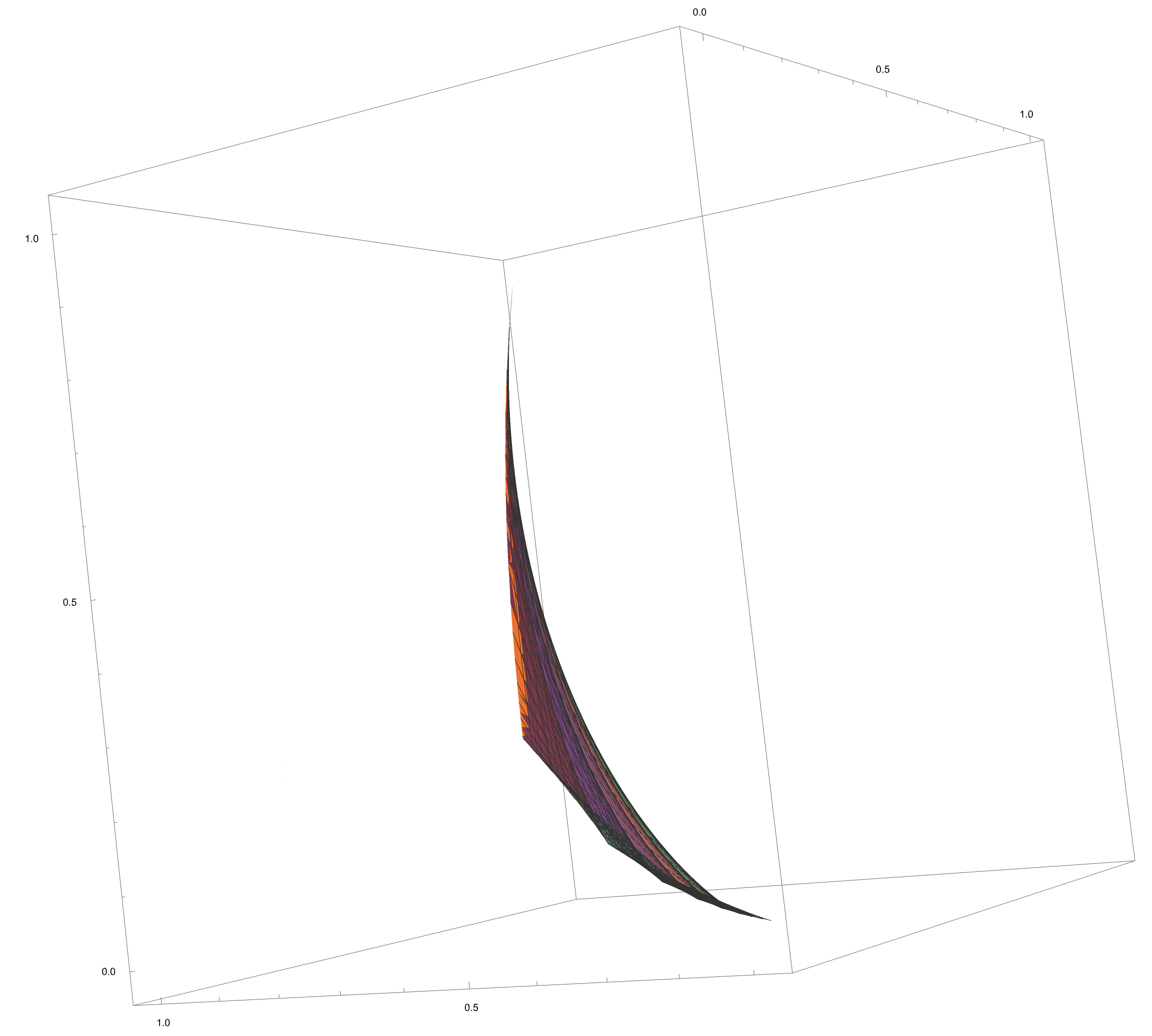}

\caption{The power-sum profile $\Pi_4$.}
\label{fig:psp3d}

\end{figure}

An explicit homeomorphism is given in Section 3 of \cite{abdr}, whereas Section 4 of that paper goes into greater details about $\Pi_\ell$. Loosely speaking, they show that the boundary of $\Pi_\ell$ is a gluing of countably infinitely many patches and each patch is a curved $(\ell-2)$-simplex. Gale's eveness theorem \cite{gale} still holds in this infinite setting, and so the $\ell-1$ vertices of any patch are the union of pairs of consecutive points on the moment curve with any subset of the first point on the moment curve and an accumulation point. Figure~\ref{fig:psp3d} shows $\Pi_4$.


\section{Ratio profile for cycles and necklaces}\label{sec:necklaces}

We build up to our most general profile for ratios of necklaces by first presenting the profile for ratios of cycles to help explain the intuition in the more general setting.

\subsection{Ratio profile for cycles}

Let $C_i$ be the cycle with $i$ vertices and let $$\mathcal{C}_{4\ell}^{\hom}:=\textup{cl}\left(\left\{\left(\frac{\hom(C_8;G)}{(\hom(C_4;G))^2}, \frac{\hom(C_{12};G)}{(\hom(C_4;G))^3}, \ldots, \frac{\hom(C_{4\ell};G)}{(\hom(C_4;G))^\ell}\right) \mid G \textup{ is a graph with } \hom(C_{4};G)\neq 0\right\}\right).$$ We show that this profile is a power-sum profile.

\begin{theorem}\label{thm:cycles}
We have that $\mathcal{C}_{4\ell}^{\hom} = \Pi_\ell$.
\end{theorem}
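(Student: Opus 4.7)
My plan is to prove the two inclusions $\mathcal{C}_{4\ell}^{\hom}\subseteq \Pi_\ell$ and $\Pi_\ell \subseteq \mathcal{C}_{4\ell}^{\hom}$ separately. The bridge between the two worlds is the standard identity
\[
\hom(C_k;G)=\operatorname{tr}(A_G^k)=\sum_{i=1}^n \lambda_i^k,
\]
where $A_G$ is the adjacency matrix of $G$ and $\lambda_1,\dots,\lambda_n\in\RR$ are its eigenvalues. Since the exponents of interest, $4j$, are even, all summands are nonnegative, which is exactly what is needed to land in the power-sum picture.

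For the forward inclusion, I would fix any graph $G$ with $\hom(C_4;G)\neq 0$ and set $y_i:=\lambda_i^4\geq 0$ so that $\hom(C_{4j};G)=\sum_i(\lambda_i^4)^j=p_j(\yy)$. After rescaling by $z_i:=y_i/p_1(\yy)$, which is legitimate because $p_1(\yy)=\hom(C_4;G)>0$, the $j$-th ratio coordinate becomes
\[
\frac{\hom(C_{4j};G)}{\hom(C_4;G)^j}=\frac{p_j(\yy)}{p_1(\yy)^j}=p_j(\zz),
\]
with $\zz\in\RR^n_{\geq 0}$ and $p_1(\zz)=1$. The resulting tuple lies in $\Pi_{n,\ell}\subseteq \Pi_\ell$; since $\Pi_\ell$ is closed, the whole $\mathcal{C}_{4\ell}^{\hom}$ is contained in $\Pi_\ell$.

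For the reverse inclusion, it suffices, by definition of $\Pi_\ell$ and the closure in $\mathcal{C}_{4\ell}^{\hom}$, to approximate an arbitrary point $(p_2(\xx),\dots,p_\ell(\xx))$ with $\xx\in\RR^n_{\geq 0}$ and $p_1(\xx)=1$. The natural building block is the complete graph $K_n$, whose adjacency spectrum is $\{n-1\}\cup\{-1\}^{n-1}$ and thus gives $\hom(C_{4j};K_n)=(n-1)^{4j}+(n-1)$: one positive eigenvalue $n-1$ that dominates for large $n$. For a large scaling parameter $M$, I would pick, for each $i$ with $x_i>0$, a positive integer $n_i(M)$ with $(n_i(M)-1)^4$ as close as possible to $Mx_i$, then set $G_M:=\bigsqcup_i K_{n_i(M)}$ (which has $\hom(C_4;G_M)\neq 0$). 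The eigenvalue vector $\yy^{(M)}:=((n_i(M)-1)^4)_i$ has its normalization $\yy^{(M)}/p_1(\yy^{(M)})$ converging coordinatewise to $\xx$ as $M\to\infty$, so by continuity of $p_j$ the leading terms of the ratios tend to $p_j(\xx)$.

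The main obstacle, and the one step that requires care, is showing that the subdominant $-1$ eigenvalues of the $K_{n_i(M)}$ do not spoil the limit. With $n_i(M)-1\sim(Mx_i)^{1/4}$, the numerator expands as $\hom(C_{4j};G_M)=M^j p_j(\xx)+O(M^{1/4})$ while $\hom(C_4;G_M)=M+O(M^{1/4})$, so the ratio is $p_j(\xx)+O(M^{-3/4})$ for every $j\geq 1$. Hence the limit along $M\to\infty$ recovers $(p_2(\xx),\dots,p_\ell(\xx))$, finishing the reverse inclusion. The cleanliness of the argument comes from the fact that $K_n$ is the (essentially unique) simple graph whose adjacency matrix has a single dominant eigenvalue of multiplicity one; using $K_{a,b}$, for instance, would introduce a spurious factor $2^{1-j}$ because its positive and negative dominant eigenvalues both survive the $4$-th power.
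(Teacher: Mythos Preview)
Your argument is correct and follows essentially the same route as the paper: the forward inclusion via the substitution $x_i=\lambda_i^4/\sum_k\lambda_k^4$, and the reverse inclusion via disjoint unions of complete graphs $K_{n_i}$ with $n_i-1\approx (M x_i)^{1/4}$ (the paper writes $K_{\sqrt[4]{y_i}N+1}$ without explicitly rounding, but the idea is identical). One minor slip: the stated error $O(M^{1/4})$ in $\hom(C_{4j};G_M)=M^j p_j(\xx)+O(M^{1/4})$ is too small for $j\ge 2$, since the integer rounding of $n_i(M)$ alone contributes $O(M^{j-1/4})$; this does not affect the limit, and your overall argument is sound.
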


\begin{proof}
We first show that $\mathcal{C}_{4\ell}^{\hom} \subseteq \Pi_\ell$. Let $A_G$ be the adjacency matrix of a graph $G$ on $n$ vertices. Recall that the entry $(u, v)$ of $A_G^j$ is the number of walks of length $j$ between the vertices $u$ and $v$ of $G$, and in particular, the entry $(v,v)$ of $A^j_G$ is the number of walks of length $j$ starting and ending at some vertex $v$. This means that $\hom(C_{j};G)=\textup{tr}(A_G^j)$ which in turn is equal to $\sum_{i=1}^n \lambda_i^j$ where $\lambda_1, \ldots, \lambda_n$ are the eigenvalues of $A_G$. Therefore, we have that 
$$\mathcal{C}_{4\ell}^{\hom}=\textup{cl}\left(\left\{\frac{\sum_{i} \lambda_i^8}{(\sum_{i} \lambda_i^4)^2}, \frac{\sum_{i} \lambda_i^{12}}{(\sum_{i} \lambda_i^4)^3}, \ldots, \frac{\sum_{i} \lambda_i^{4\ell}}{(\sum_{i} \lambda_i^4)^\ell} \mid G \textup{ is a graph with eigenvalues } \lambda_i \textup{'s s.t.\ } \sum_{i} \lambda_i^4 \neq 0\right\}\right).$$ Fix a graph $G$ on $n$ vertices and where $\lambda_i$'s are the eigenvalues of $A_G$. Letting $x_i=\frac{\lambda_i^4}{\sum_{k=1}^n(\lambda_k^4)}$ for $1\leq i \leq n$ and $x_i=0$ for $i>n$, we have $p_j(\xx)=\frac{\hom(C_{4j};G)}{\hom(C_{4};G)^j}$. Note that $p_1(\xx)=1$ and $x_i\geq 0$ for all $i\in [n]$, so 

$$\mathcal{C}_{4\ell}^{\hom}=\left\{(p_2(\xx), p_3(\xx), \ldots, p_\ell(\xx)) \mid x_i=\frac{\lambda_i^4}{\sum_{k=1}^n(\lambda_k^4)} \forall i\in[n] \textup{ for some graph }G \right\}\subseteq \Pi_\ell.$$ 

We now show that $\mathcal{C}_{4\ell}^{\hom} \supseteq \Pi_\ell$. Consider an arbitrary point $(p_2(\yy), p_3(\yy), \ldots, p_\ell(\yy))\in \Pi_\ell$ where $$\yy := (y_1, y_2, \ldots, y_n, 0, 0, \ldots)$$  for some $n\in \NN$ such that $p_1(\yy) = 1$ and where $y_i\geq 0$ for every $i$. We show that $(p_2(\yy), p_3(\yy), \ldots, p_\ell(\yy))$ is in  $\mathcal{C}_{4\ell}^{\hom}$. 

For $i\in [n]$, let $G_i$ be the graph $K_{\sqrt[4]{y_i} N+1}$. Thus $\sqrt[4]{y_i} N$ appears once among the eigenvalues of $A_{G_i}$ and the remaining $\sqrt[4]{y_i}N$ eigenvalues are $-1$.  Let $G$ be the disjoint union of the $G_i$'s for $i\in [n]$. We thus have that 

\begin{align*} 
\frac{\hom(C_{4j};G)}{\hom(C_4;G)^j} &=\frac{\sum_{i=1}^{n} \left( (\sqrt[4]y_i N)^{4j} + \sqrt[4]{y_i}N (-1)^{4j}\right)}{\left(\sum_{i=1}^{n} \left( (\sqrt[4]y_i N )^{4} + \sqrt[4]{y_i}N (- 1)^{4}\right)\right)^{j}}\\
&=\frac{\sum_{i=1}^{n} (y_i^j N^{4j} + \sqrt[4]{y_i}N)}{\left(\sum_{i=1}^{n} (y_i N^{4} + \sqrt[4]{y_i}N)\right)^j}.\end{align*}

So as $N\rightarrow \infty$, we have that 
$$\frac{\hom(C_{4j};G)}{\hom(C_4;G)^j} \rightarrow \frac{\sum_{i=1}^{n} y_i^j}{\left(\sum_{i=1}^{n} y_i \right)^j} = \frac{p_j(\yy)}{(p_1(\yy))^j}=p_j(\yy),$$ and so $(p_2(\yy), p_3(\yy), \ldots, p_\ell(\yy))\in \mathcal{C}_{4\ell}^{\hom}$.
\end{proof}

\begin{corollary}
Recall that $$\mathcal{C}_{4\ell}=\textup{cl}\left(\left\{\left(\frac{t(C_8;G)}{(t(C_4;G))^2}, \frac{t(C_{12};G)}{(t(C_4;G))^3}, \ldots, \frac{t(C_{4\ell};G)}{(t(C_4;G))^\ell}\right) \mid G \textup{ is a graph s.t.\ } t(C_{4};G)\neq 0\right\}\right).$$ We have that $\mathcal{C}_{4\ell}=\Pi_\ell.$
\end{corollary}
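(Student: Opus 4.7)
The plan is to reduce the corollary to Theorem~\ref{thm:cycles} by observing that the vertex-count normalizations cancel perfectly in each coordinate of the ratio. Concretely, since $|V(C_{4j})| = 4j$, the definition $t(H;G) = \hom(H;G)/|V(G)|^{|V(H)|}$ gives
\begin{equation*}
\frac{t(C_{4j};G)}{t(C_4;G)^j} = \frac{\hom(C_{4j};G)/|V(G)|^{4j}}{\hom(C_4;G)^j/|V(G)|^{4j}} = \frac{\hom(C_{4j};G)}{\hom(C_4;G)^j}
\end{equation*}
for every graph $G$ with at least one vertex. In particular, the ratio coordinates defining $\mathcal{C}_{4\ell}$ coincide with those defining $\mathcal{C}_{4\ell}^{\hom}$.

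Next I would note that the constraint sets also agree: for a graph $G$ on a positive number of vertices, $t(C_4;G) \neq 0$ if and only if $\hom(C_4;G) \neq 0$. Hence the set whose closure defines $\mathcal{C}_{4\ell}$ is literally the same set as the one whose closure defines $\mathcal{C}_{4\ell}^{\hom}$, and taking closure of the same set in $\RR^{\ell-1}$ yields the same limit object. Therefore $\mathcal{C}_{4\ell} = \mathcal{C}_{4\ell}^{\hom}$, and applying Theorem~\ref{thm:cycles} immediately gives $\mathcal{C}_{4\ell} = \Pi_\ell$.

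There is essentially no obstacle: the only thing to verify is the homogeneity-in-$|V(G)|$ cancellation, which works because each cycle in the numerator has exactly the right number of vertices to balance the $j$-th power of $t(C_4;G)$ in the denominator. This is precisely the reason the profile was set up with ratios of matched degrees, and it is also the reason the analogous corollary for necklaces $\mathcal{N}_{4\ell,q}$ and the more general $\mathcal{N}_{4\ell,\leq r}$ will follow from the homomorphism-level result in the same one-line manner.
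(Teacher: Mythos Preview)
Your proof is correct and essentially identical to the paper's: both observe that the $|V(G)|^{4j}$ factors cancel so that $\frac{t(C_{4j};G)}{t(C_4;G)^j}=\frac{\hom(C_{4j};G)}{\hom(C_4;G)^j}$, whence $\mathcal{C}_{4\ell}=\mathcal{C}_{4\ell}^{\hom}$, and then invoke Theorem~\ref{thm:cycles}. Your additional remarks (that the constraint sets coincide and that the same cancellation underlies the necklace corollary) are accurate and slightly more detailed than the paper's version.
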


\begin{proof}
First observe that $$\frac{t(C_{4j};G)}{t(C_4;G)^j}=\frac{\frac{\hom(C_{4j};G)}{|V(G)|^{4j}}}{\frac{\hom(C_4;G)^j}{(|V(G)|^4)^j}}=\frac{\hom(C_{4j};G)}{\hom(C_4;G)^j},$$ and so the ratio profile of densities $\mathcal{C}_{4\ell}$ is equal to the ratio profile of numbers $\mathcal{C}_{4\ell}^{\hom}$. The result then follows from Theorem~\ref{thm:cycles}.
\end{proof}

As explained in the introduction, this means that we can recover the usual density profile $$\mathcal{C}^*_{4\ell}:=\textup{cl}\left(\left\{\left(t(C_4,G), t(C_8;G), \ldots, t(C_{4\ell};G)\right) \mid G \textup{ is a graph}\right\}\right)$$ up to one dimension.

\subsection{Ratio profile for necklaces}
We now generalize the previous profile to necklaces. 

\begin{definition}
Given some graph $G$ and some integer $q\geq 3$, the \emph{$q$-ification of $G$} is the graph obtained as follows: for every edge of $G$, add $q-2$ vertices that are all pairwise adjacent and that are all adjacent to the two vertices of the selected edge. In other words, each edge of $G$ gets replaced by a clique of size $q$. More formally, the $q$-ification of $G$ is the graph with vertex set $V=\{v\mid v\in V(G)\}\cup \{(\{u,v\},i)\mid\{u,v\}\in E(G) \textup{ and } i\in [q-2]\}$ and edge set $E=\{\{u,v\}\mid \{u,v\}\in E(G)\}\cup \{\{v,(\{u,v\},i)\mid \{u,v\}\in E(G) \textup{ and } i\in [q-2]\} \cup \{\{(\{u,v\},i), (\{u,v\}, j)\} \mid \{u,v\}\in E(G) \textup{ and } i,j\in [q-2] \textup{ where }  i\neq j\}$.
\end{definition}

Necklaces are defined as the $q$-ification of cycles. 

\begin{definition}
Let $N_{c,q}$ be the $q$-ification of the cycle of length $c$. We call $N_{c,q}$ the \emph{$q$-necklace} of length $c$. For example, Figure~\ref{fig:N54P33} shows a $4$-necklace of length 5. 
\end{definition}

\begin{figure}[h!]
\centering
\includegraphics[scale=0.3]{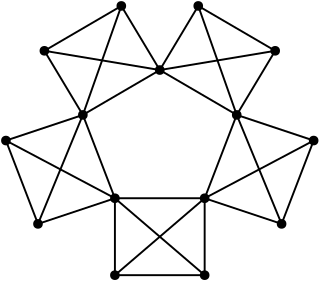} 
\caption{The $4$-necklace of length 5, $N_{5,4}$.}\label{fig:N54P33}
\end{figure}

In the proof of Theorem~\ref{thm:cycles}, we used the adjacency matrix of $G$ to count $\hom(C_c;G)$. We now look at an analogue of the adjacency matrix for cliques so that we can count $\hom(N_{c,q};G)$. 

\begin{definition}\label{def:MWq}
Let $G$ be a graph on $n$ vertices and let $M_{G,q}$ be an $n\times n$ matrix where the entry $(u,v)$ records the number of cliques of size $q$ in $G$ containing edge $\{u,v\}$. Note that $M_{G,2}=A_G$.
\end{definition}

From \cite{BRW}, we  can relate the number of homomorphisms in a necklace to the power sum of the eigenvalues in $M_{G,q}$. Indeed, just as entry $(u,v)$ of $A^j_G$ counts the number of walks of length $j$ in $G$ starting at vertex $u$ and ending at vertex $v$, entry $(u, v)$ of $M_{G,q}^j$ counts the number of $q$-ifications of such walks. In particular, this means that entry $(v,v)$ of $M_{G,q}^j$ counts the number of $q$-ifications of closed walks of length $j$ starting and ending at $v$.

\begin{lemma}\label{lem:countnecklace}
For any graph $G$, we have that $\hom(N_{j,q};G)=\sum_{i} \lambda_i^j$ where the $\lambda_i$'s are the eigenvalues of $M_{G,q}$.
 \end{lemma}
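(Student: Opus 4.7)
The plan is to extend the classical identity $\hom(C_j;G) = \operatorname{tr}(A_G^j) = \sum_i \mu_i^j$ by replacing $A_G$ with $M_{G,q}$ and by replacing walks with their $q$-ifications. The key entry-level claim to establish is: for every $j \geq 1$ and every pair $u, v \in V(G)$, the entry $(M_{G,q}^j)_{u,v}$ equals the number of walks $(u = w_0, w_1, \dots, w_j = v)$ in $G$ equipped with a $q$-clique of $G$ extending each step $\{w_i, w_{i+1}\}$. I would prove this by induction on $j$: the base case $j=1$ is the definition of $M_{G,q}$, and the inductive step follows from the matrix identity
\[ (M_{G,q}^j)_{u,v} = \sum_{w \in V(G)} (M_{G,q}^{j-1})_{u,w}\,(M_{G,q})_{w,v}, \]
which is exactly the combinatorial decomposition of such a $q$-ified walk at its penultimate vertex $w$: one independently selects a $q$-ified walk of length $j-1$ from $u$ to $w$ and a $q$-clique of $G$ extending the final edge $\{w, v\}$.

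Setting $u = v$ and summing then gives $\operatorname{tr}(M_{G,q}^j) = \hom(N_{j,q}; G)$, because a closed $q$-ified walk of length $j$ is exactly the data of a homomorphism from the necklace $N_{j,q}$ to $G$: the underlying closed walk encodes the image of the cycle of length $j$ in $N_{j,q}$, and the chosen $q$-clique at each step encodes the image of the remaining $q-2$ labeled clique-vertices attached to the corresponding edge of $N_{j,q}$. Because $M_{G,q}$ is a real symmetric matrix (the relation ``$q$-clique containing the edge $\{u,v\}$'' is symmetric in $u$ and $v$), it has real eigenvalues $\lambda_1, \dots, \lambda_n$ and $\operatorname{tr}(M_{G,q}^j) = \sum_i \lambda_i^j$, which yields the claim.

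The only point that will require care is the bookkeeping for the $q-2$ labeled extra vertices attached to each edge of $N_{j,q}$: each unordered $q$-clique of $G$ containing a fixed edge admits $(q-2)!$ orderings of its $q-2$ other vertices, so one must either tacitly redefine $M_{G,q}$ so that its $(u,v)$-entry counts ordered $(q-2)$-tuples completing a $q$-clique on $\{u,v\}$, or else accept a harmless scaling factor of $((q-2)!)^j$ which cancels in every ratio profile considered in this paper. Once the counting convention is fixed, the remainder of the argument is a mechanical matrix-power computation and the only structural input is the symmetry of $M_{G,q}$.
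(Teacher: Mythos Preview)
Your approach is correct and is exactly what the paper does: the paragraph preceding the lemma sketches the same walk-counting argument (entry $(u,v)$ of $M_{G,q}^j$ counts $q$-ifications of walks of length $j$ from $u$ to $v$), and the lemma itself is stated without proof, with a citation to \cite{BRW}. Your inductive argument and the identification of closed $q$-ified walks with homomorphisms from $N_{j,q}$ are the standard fleshing-out of that sketch.

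Your caveat about the $(q-2)!$ factor is well taken and worth recording. With Definition~\ref{def:MWq} as written (the $(u,v)$-entry counts \emph{unordered} $q$-cliques on $\{u,v\}$), one actually gets
\[
\hom(N_{j,q};G)=\bigl((q-2)!\bigr)^{j}\operatorname{tr}(M_{G,q}^j)=\bigl((q-2)!\bigr)^{j}\sum_i \lambda_i^{j},
\]
because the $q-2$ extra vertices of each bead of $N_{j,q}$ are labelled and must be mapped injectively into the remaining $q-2$ vertices of the chosen clique. The paper's statement suppresses this factor (or, equivalently, tacitly takes the $(u,v)$-entry to count ordered $(q-2)$-tuples). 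As you note, the factor cancels in every ratio $\hom(N_{4j,q};G)/\hom(N_{4,q};G)^j$, so none of the theorems are affected.
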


We generalize $\mathcal{C}_{4\ell}^{\hom}$ to $\mathcal{N}_{4\ell,q}^{\hom}$ which we define as $$\textup{cl}\left(\left\{\left(\frac{\hom(N_{8,q};G)}{(\hom(N_{4,q};G))^2}, \frac{\hom(N_{12,q};G)}{(\hom(N_{4,q};G))^3}, \ldots, \frac{\hom(N_{4\ell,q};G)}{(\hom(N_{4,q};G))^\ell}\right) \mid G \textup{ is a graph with } \hom(N_{4,q};G)\neq 0\right\}\right).$$ Note that $\mathcal{N}_{4\ell,2}^{\hom}=\mathcal{C}_{4\ell}^{\hom}$. Using more or less the same proof as for Theorem~\ref{thm:cycles} but with our $q$-generalization of the adjacency matrix, we show that this profile is a power-sum profile.

\begin{theorem}\label{thm:necklacessingleq}
Fix $q\geq 2$. We have that $\mathcal{N}_{4\ell, q}^{\hom} = \Pi_\ell$.
\end{theorem}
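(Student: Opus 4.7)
The plan is to follow the structure of the proof of Theorem~\ref{thm:cycles} essentially verbatim, with the adjacency matrix $A_G$ replaced by the $q$-adjacency matrix $M_{G,q}$ of Definition~\ref{def:MWq} and Lemma~\ref{lem:countnecklace} used in place of the standard trace identity. This reduces both inclusions to power-sum statements about the eigenvalues of $M_{G,q}$.

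For the inclusion $\mathcal{N}_{4\ell,q}^{\hom} \subseteq \Pi_\ell$, I would fix a graph $G$ on $n$ vertices with $\hom(N_{4,q};G) \neq 0$, let $\lambda_1,\dots,\lambda_n$ be the eigenvalues of $M_{G,q}$, and apply Lemma~\ref{lem:countnecklace} to write $\hom(N_{4j,q};G) = \sum_i \lambda_i^{4j}$. Setting $x_i := \lambda_i^4 / \sum_k \lambda_k^4$ for $i \in [n]$ and $x_i := 0$ for $i>n$ gives a nonnegative vector $\xx$ (fourth powers are nonnegative) with $p_1(\xx) = 1$ and $p_j(\xx) = \hom(N_{4j,q};G)/\hom(N_{4,q};G)^j$, realizing the desired point in $\Pi_\ell$.

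For the reverse inclusion $\mathcal{N}_{4\ell,q}^{\hom} \supseteq \Pi_\ell$, given $\yy = (y_1,\dots,y_n,0,0,\dots)$ with $y_i \geq 0$ and $p_1(\yy) = 1$, I would form $G$ as the disjoint union of cliques $G_i := K_{m_i+1}$ where $m_i \approx y_i^{1/(4(q-1))} N$ and $N \to \infty$. Since every edge of $K_{m+1}$ lies in $\binom{m-1}{q-2}$ cliques of size $q$, we have $M_{K_{m+1},q} = \binom{m-1}{q-2}\, A_{K_{m+1}}$, whose eigenvalues are $\mu := m\binom{m-1}{q-2}$ (once) and $-\binom{m-1}{q-2}$ (with multiplicity $m$). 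Since $M_{G,q}$ is block diagonal, summing over the blocks yields
\[
\hom(N_{4k,q};G) \;=\; \sum_{i=1}^n \Bigl( \mu_i^{4k} + m_i \binom{m_i-1}{q-2}^{4k} \Bigr).
\]

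The key asymptotic check is that the $\mu_i^{4k}$ term dominates: $\mu_i^{4k} \sim m_i^{4k(q-1)}/(q-2)!^{4k}$ whereas the secondary term grows like $m_i^{4k(q-2)+1}$, and the exponent gap $4k-1 \geq 3$ guarantees that the latter becomes negligible as $N \to \infty$. With the chosen scaling, $\mu_i^{4k} \sim y_i^{k} N^{4k(q-1)}/(q-2)!^{4k}$, so the ratio $\hom(N_{4k,q};G)/\hom(N_{4,q};G)^k$ converges to $\sum_i y_i^k / (\sum_i y_i)^k = p_k(\yy)$, exactly as in the cycle case. The main place requiring care is verifying this asymptotic dominance and handling the floor in $m_i = \lfloor y_i^{1/(4(q-1))} N \rfloor$ together with any indices where $y_i = 0$ (in which case $G_i$ is a single vertex and contributes nothing); neither issue is serious, but they are the only steps beyond the direct translation of the cycle proof.
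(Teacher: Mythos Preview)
Your proposal is correct and essentially identical to the paper's proof: both directions proceed exactly as you describe, with the reverse inclusion realized by a disjoint union of cliques $K_{z_iN+1}$ where $z_i = y_i^{1/(4(q-1))}$, using that $M_{K_{m+1},q} = \binom{m-1}{q-2}(P-I)$ and comparing leading terms in $N$. The only cosmetic difference is that the paper does not explicitly discuss the floor or the $y_i=0$ case.
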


\begin{proof}

We first show that $\mathcal{N}_{4\ell,q}^{\hom} \subseteq \Pi_\ell$.  From Lemma~\ref{lem:countnecklace}, since $\hom(N_{4j,q};G)=\sum_{i} \lambda_i^{4j}$ where the $\lambda_i$'s are the eigenvalues of $M_{G,q}$, we have $$\mathcal{N}_{4\ell,q}^{\hom}=\textup{cl}\left(\left\{\frac{\sum_{i} \lambda_i^8}{(\sum_{i} \lambda_i^4)^2}, \frac{\sum_{i} \lambda_i^{12}}{(\sum_{i} \lambda_i^4)^3}, \ldots, \frac{\sum_{i} \lambda_i^{4\ell}}{(\sum_{i} \lambda_i^4)^\ell} \mid G \textup{ is a graph s.t.\ } \sum_{i} \lambda_i^4 \neq 0\right\}\right).$$ Fix a graph $G$ on $n$ vertices and where $\lambda_i$'s are the eigenvalues of $M_{G,q}$. Letting $x_i=\frac{\lambda_i^4}{\sum_{k=1}^n(\lambda_k^4)}$ for $1\leq i \leq n$ and $x_i=0$ for $i>n$, we have $p_j(\xx)=\frac{\hom(N_{4j,q};G)}{\hom(N_{4,q};G)^j}$. Note that $p_1(\xx)=1$ and $x_i\geq 0$ for all $i$, so 

$$\mathcal{N}_{4\ell,q}^{\hom}=\left\{(p_2(\xx), p_3(\xx), \ldots, p_\ell(\xx)) \mid x_i=\frac{\lambda_i^4}{\sum_{k=1}^n(\lambda_k^4)} \forall i\in[n] \textup{ for some graph }G \right\}\subseteq \Pi_\ell.$$

We now show that $\mathcal{N}_{4\ell,q}^{\hom} \supseteq \Pi_\ell$. Consider an arbitrary point $(p_2(\yy), p_3(\yy), \ldots, p_\ell(\yy))\in \Pi_\ell$ where $$\yy := (y_1, y_2, \ldots, y_n, 0, 0, \ldots)$$ for some $n\in \NN$ such that $p_1(\yy)= 1$ and where $y_i\geq 0$ for every $i$. We show that $(p_2(\yy), p_3(\yy), \ldots, p_\ell(\yy))$ is in $\mathcal{N}_{4\ell, q}^{\hom}$ for every $q\geq 3$. We have already settled the case $q=2$ in Theorem~\ref{thm:cycles}. 

For $i\in[n]$, let $G_i$ be the graph $K_{z_iN+1}$ for some large $N$ where $z_i=y_i^{\frac{1}{4(q-1)}}$. Assuming that $z_iN+1 \geq q$, then each edge of $G_i$ is contained in exactly $\binom{z_iN-1}{q-2}$ cliques of size $q$ within $K_{z_iN+1}$. So $M_{G_i,q}$ has the form $\binom{z_iN-1}{q-2}(P-I)$ where $P$ is the all-ones matrix and $I$ is the identity matrix, both of size $((z_iN+1)\times (z_iN+1))$. Thus $z_iN \binom{z_iN-1}{q-2}$ appears once among the eigenvalues of $M_{G_i,q}$ and the remaining $z_iN$ eigenvalues are $-\binom{z_iN-1}{q-2}$. Let $G$ be the disjoint union of the $G_i$'s for $i\in [n]$. We then have

\begin{align*} 
\frac{\hom(N_{4j,q};G)}{\hom(N_4;G)^j} &=\frac{\sum_{i=1}^{n}  \left( \left(z_iN \binom{z_i N -1}{q-2} \right)^{4j} +z_iN \left( - \binom{z_i N-1}{q-2} \right)^{4j}\right)}{\left(\sum_{i=1}^{n}  \left( \left(z_i N \binom{z_i N -1}{q-2} \right)^{4} + z_i N \left( - \binom{z_i N-1}{q-2} \right)^{4}\right)\right)^{j}}\\
&=\frac{\sum_{i=1}^{n}  (z_i^{4j} N^{4j} + z_i N) \binom{z_iN-1}{q-2}^{4j}}{\left(\sum_{i=1}^{n}  (z_i^4 N^{4} + z_iN) \binom{z_i N-1}{q-2}^{4}\right)^j}\\
&=\frac{\sum_{i=1}^{n} (z_i^{4j} N^{4j} + z_i N) \left(\frac{z_i^{q-2}N^{q-2}}{(q-2)!}+O(N^{q-3})\right)^{4j}}{\left(\sum_{i=1}^{n} (z_i^4 N^{4} + z_iN)\left(\frac{z_i^{q-2}N^{q-2}}{(q-2)!}+O(N^{q-3})\right)^{4}\right)^j}.
\end{align*}

Note that the largest degree term in $N$ in each summand of the numerator is $$z_i^{4j}N^{4j} \frac{z_i^{(q-2)4j}}{((q-2)!)^{4j}}N^{4j(q-2)}=\frac{z_i^{(q-1)4j}N^{4j(q-1)}}{((q-2)!)^{4j}}=\frac{y_i^j N^{4j(q-1)}}{((q-2)!)^{4j}}.$$ Similarly, the largest degree term in each summand of the denominator (before taking the $j$th power) is $$z_i^{4}N^{4} \frac{z_i^{(q-2)4}}{((q-2)!)^{4}}N^{4(q-2)}=\frac{z_i^{(q-1)4}N^{4(q-1)}}{((q-2)!)^{4}}=\frac{y_i N^{4(q-1)}}{((q-2)!)^{4}}.$$ So as $N\rightarrow \infty$, we have that 
\begin{align*} 
\frac{\hom(N_{4j,q};G)}{\hom(N_4;G)^j} & \rightarrow \frac{\sum_{i=1}^{n}\frac{y_i^j N^{4j(q-1)}}{((q-2)!)^{4j}}}{\left(\sum_{i=1}^{n}\frac{y_i N^{4(q-1)}}{((q-2)!)^{4}}  \right)^j}= \frac{\frac{N^{4j(q-1)}}{((q-2)!)^{4j}}\sum_{i=1}^{n}  y_i^j }{\left(\frac{N^{4(q-1)}}{((q-2)!)^{4}} \right)^j \left(\sum_{i=1}^{n}y_i  \right)^j}=\frac{\sum_{i=1}^{n}  y_i^j}{\left(\sum_{i=1}^{n} y_i \right)^j} = \frac{p_j(\yy)}{(p_1(\yy))^j}=p_j(\yy),
\end{align*}
and so $(p_2(\yy), p_3(\yy), \ldots, p_\ell(\yy))\in \mathcal{N}_{4\ell,q}^{\hom}$.
\end{proof}

We now finally generalize $\mathcal{N}_{4\ell,q}^{\hom}$ to a larger family of ratios going over different $q$'s. Let $\mathcal{N}_{4\ell,\leq r}^{\hom}$ be the closure of the points of the form \begin{small}$$\left(\frac{\hom(N_{8,2};G)}{(\hom(N_{4,2};G))^2}, \frac{\hom(N_{12,2};G)}{(\hom(N_{4,2};G))^3}, \ldots, \frac{\hom(N_{4\ell,2};G)}{(\hom(N_{4,2};G)}, \ldots, \frac{\hom(N_{8,r};G)}{(\hom(N_{4,r};G))^2}, \frac{\hom(N_{12,r};G)}{(\hom(N_{4,r};G))^3}, \ldots, \frac{\hom(N_{4\ell,r};G)}{(\hom(N_{4,r};G))^\ell}\right)$$\end{small} where $G$  is a graph such that $\hom(N_{4,q};G)\neq 0$ for any $2 \leq q \leq r$.

The argument will proceed as before, however, we need a more complicated construction that we now introduce. 

\begin{definition}
Let $A(k,2)$ be a triangle-free graph with $n$ vertices that is $d$-regular and where the second largest eigenvalue (in absolute value) of the adjacency matrix of $A(k,2)$ is $\lambda$, where $n=\Theta(2^{3k})$, $d=\Theta(2^{2k})$, and $\lambda=\Theta(2^k)$. Here the constants in $\Theta$ are absolute constants. For an integer $q\geq 3$, let $A(k,q)$ be the  $q$-ification of $A(k,2)$.

The graphs $A(k,2)$ are known as $(n,d,\lambda)$-graphs in the literature, and were first constructed by Alon in \cite{Alon}. 
 Since $A(k,2)$ has $\frac{nd}{2}=\Theta(2^{5k-1})$ edges and since the $q$-ification $A(k,q)$ introduces $q-2$ vertices for every edge of $A(k,2)$, $A(k,q)$ has $n+(q-2)\frac{dn}{2}=\Theta(2^{3k}+(q-2)2^{5k-1})$ vertices. The original vertices coming from $A(k,2)$ have degree $d(q-1)$, and the new vertices have degree $q-1$. Further, the original vertices are each contained in $d$ $q$-cliques, whereas the new vertices are contained in exactly one $q$-clique. 
\end{definition}

From Corollary 2.19 and Lemma 2.20 of \cite{BRW}, the following statement can be made about the spectrum of $M_{A(k,q),p}$

\begin{lemma}\label{lem:brwspectrum}
Fix $q\geq p \geq 2$. The top eigenvalue of $M_{A(k,q),p}$ is $\Theta(2^{2k})$, at most $O(2^{3k})$ eigenvalues are $O(2^{k})$, and the rest are $O(1)$. 
\end{lemma}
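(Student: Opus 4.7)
The plan is to reduce the statement to a spectral claim about the plain adjacency matrix of $A(k,q)$ and then exploit the block structure coming from the $q$-ification to transfer the analysis to the $(n,d,\lambda)$-spectrum of $A(k,2)$.

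First I would use the triangle-freeness of $A(k,2)$: every triangle in $A(k,q)$ must include at least one ``new'' vertex, which pins the triangle to the unique $q$-clique of the $q$-ification containing it; a short argument using that any two edge-cliques share at most one vertex then shows that every $p$-clique with $p \geq 3$ lies inside a single edge-clique. Consequently each edge of $A(k,q)$ is in exactly $\binom{q-2}{p-2}$ many $p$-cliques, so
\[
M_{A(k,q),p} \;=\; \binom{q-2}{p-2}\, A_{A(k,q)}.
\]
Since this scalar is constant in $k$, it suffices to prove the claim for $B := A_{A(k,q)}$; the case $q=2$ reduces to the $(n,d,\lambda)$-spectrum directly.

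Second, for $q \geq 3$ I would decompose $\RR^{V(A(k,q))}$ using the $S_{q-2}$-symmetry permuting the new vertices on each edge. Writing $V(A(k,q)) = V_0 \sqcup V_1$ with $V_0 = V(A(k,2))$ and $V_1 = \bigsqcup_{e \in E} V_e$ where $|V_e| = q-2$, split $\RR^{V_1}$ into its symmetric part $S$ (vectors constant on each $V_e$) and its mean-zero complement $S^{\perp}$. On $S^{\perp}$, $B$ acts blockwise as $J_{q-2} - I_{q-2}$ restricted to sum-zero vectors, i.e.\ as $-I$, producing $(q-3)|E| = \Theta(2^{5k})$ eigenvalues equal to $-1$. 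On $\RR^{V_0} \oplus S \cong \RR^{V_0} \oplus \RR^{E}$, $B$ has the form
\[
B(v_0, w) \;=\; \bigl(A_{A(k,2)} v_0 + (q-2) N w,\; N^{T} v_0 + (q-3) w\bigr),
\]
where $N$ is the $V_0 \times E$ incidence matrix of $A(k,2)$. For each eigenvalue $\mu$ of $A_{A(k,2)}$, eliminating $w$ yields two eigenvalues of $B$ solving $\lambda^2 - ((q-3)+\mu)\lambda + \mu - (q-2)d = 0$; the remaining $|E| - n$ symmetric eigenvectors lie in $\{0\} \oplus \ker N$ and all carry the eigenvalue $q-3$.

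Finally, I would feed in the $(n,d,\lambda)$-bounds. The unique $\mu = d = \Theta(2^{2k})$ gives the top root $\lambda_+ = \Theta(2^{2k})$ and a second root of size $O(1)$; each of the $n-1$ other $\mu$'s satisfies $|\mu| \leq \Theta(2^k)$, and the discriminant is then dominated by $4(q-2)d$, so both corresponding roots are $\Theta(2^k)$. Totaling: one eigenvalue of order $2^{2k}$, $2(n-1) = \Theta(2^{3k})$ eigenvalues of order $O(2^k)$, and the remaining $(q-3)|E| + (|E|-n) + 1 = \Theta(2^{5k})$ of order $O(1)$, matching the total dimension $n + (q-2)|E|$. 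The main obstacle I expect is the bookkeeping in the middle step -- verifying invariance of the symmetric/antisymmetric split, identifying the $\ker N$ contribution, and checking that the dimensions add up to the total; once that is in place, the asymptotic analysis of the quadratic is routine.
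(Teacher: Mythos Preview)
The paper does not actually prove this lemma: it is stated as a consequence of Corollary~2.19 and Lemma~2.20 of \cite{BRW} and no argument is given. Your proposal, by contrast, is a self-contained spectral computation, and it is essentially correct.

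Two small corrections. First, after eliminating $w$ using $NN^{T}=A_{A(k,2)}+dI$ (valid since $A(k,2)$ is $d$-regular), the quadratic one obtains is
\[
\lambda^{2}-\bigl((q-3)+\mu\bigr)\lambda-\mu-(q-2)d=0,
\]
with constant term $-\mu-(q-2)d$ rather than $+\mu-(q-2)d$. This sign slip is harmless for the asymptotics: for $\mu=d$ the roots are still $d+O(1)=\Theta(2^{2k})$ and $O(1)$, and for $|\mu|=O(2^{k})$ the discriminant is still dominated by $4(q-2)d=\Theta(2^{2k})$, so both roots are $O(2^{k})$. Second, because the second largest eigenvalue of $A(k,2)$ in absolute value is $\Theta(2^{k})\ll d$, the value $-d$ is not in the spectrum; hence $NN^{T}=A+dI$ is positive definite, $N$ has full row rank $n$, and $\dim\ker N=|E|-n$ on the nose, which makes your dimension count $1+2(n-1)+1+(|E|-n)+(q-3)|E|=n+(q-2)|E|$ exact.

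In short, where the paper outsources the lemma to \cite{BRW}, you give a direct argument via the scalar identity $M_{A(k,q),p}=\binom{q-2}{p-2}A_{A(k,q)}$ (forced by triangle-freeness) followed by an explicit block diagonalisation of $A_{A(k,q)}$ under the $S_{q-2}$-symmetry. This is more hands-on than a citation but has the advantage of being self-contained and of making transparent exactly where the three eigenvalue scales come from.
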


We now informally explain the strategy of the proof of Theorem~\ref{thm:necklaces}. A key point is that $\hom(N_{4j,p};A(k,q))$ is 0 if $q<p$ since $N_{4j,p}$ contains bigger cliques than $A(k,q)$. This allows us to independently realize different points in $\Pi_\ell$ as points in $\mathcal{N}_{4\ell,p}^{\hom}$ for different $2\leq p\leq q$. Start with the $p=q$, and realize the desired point in $\Pi_\ell$ with disjoint copies of $A(k_i,q)$'s for different $k_i$'s. Now go to $p=q-1$. Again we realize the desired point in $\Pi_\ell$ with disjoint copies of $A(k_i,q)$'s for different $k_i$'s, but we choose $k_i$'s that ensure the ratio obtained for $p=q$. And so on for smaller values of $p$. 

\begin{theorem}\label{thm:necklaces}
We have that $\mathcal{N}_{4\ell, \leq r}^{\hom} = \Pi_\ell^{r-1}$.
\end{theorem}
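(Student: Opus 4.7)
The inclusion $\mathcal{N}_{4\ell,\leq r}^{\hom}\subseteq\Pi_\ell^{r-1}$ is immediate from Theorem~\ref{thm:necklacessingleq}: projecting our $(r-1)\ell$-tuple onto the $\ell$ coordinates corresponding to any single $q\in\{2,\ldots,r\}$ lands in $\mathcal{N}_{4\ell,q}^{\hom}=\Pi_\ell$, so the whole tuple lies in $\Pi_\ell^{r-1}$.

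For the reverse inclusion I will implement the block construction sketched before the theorem statement. Given a target $(\mathbf{a}_2,\ldots,\mathbf{a}_r)\in\Pi_\ell^{r-1}$, by definition of $\Pi_\ell$ each $\mathbf{a}_p$ can be approximated by a point $(p_2(\yy^{(p)}),\ldots,p_\ell(\yy^{(p)}))$ with $\yy^{(p)}\in\RR_{\geq 0}^{n_p}$ and $p_1(\yy^{(p)})=1$, and this approximation will be absorbed into the final closure. Build $G$ as a disjoint union $B_2\sqcup B_3\sqcup\cdots\sqcup B_r$, where each $B_p$ is itself a disjoint union of Alon $p$-ifications $A(k_i^{(p)},p)$ for $i\in[n_p]$ with suitably chosen parameters. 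The decisive structural fact is that $A(k,p)$ has clique number exactly $p$: triangle-freeness of the base graph $A(k,2)$, together with a short case analysis on how many original vertices can sit in a candidate $K_{p+1}$, rules out any $K_{p+1}$ in $A(k,p)$. Consequently $\hom(N_{4j,p'};B_p)=0$ for every $p'>p$, so block $B_p$ contributes nothing at any level above $p$.

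Within level $p$, mimicking the proof of Theorem~\ref{thm:necklacessingleq} and invoking Lemma~\ref{lem:brwspectrum} to control lower-order eigenvalues, we obtain $\hom(N_{4j,p};B_p)\sim\sum_i(\omega_i^{(p)})^{4j}$, where $\omega_i^{(p)}$ is the top eigenvalue of $M_{A(k_i^{(p)},p),p}$ (of order $\Theta(2^{2k_i^{(p)}})$). The level-$p$ ratio from $B_p$ alone therefore converges to $p_j(\xx^{(p)})$ with $x_i^{(p)}=(\omega_i^{(p)})^4/\sum_{i'}(\omega_{i'}^{(p)})^4$. Because Alon's construction is available for every prime power---giving a family of top eigenvalues much denser than a coset of $\Theta(2^{2k})$ over integer $k$---the achievable $\xx^{(p)}$ vectors are dense in the unit simplex, so that after closure any target $\xx^{(p)}=\yy^{(p)}$ is realizable. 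For $p'<p$, triangle-freeness forces every edge of $A(k,p)$ to sit in exactly $\binom{p-2}{p'-2}$ cliques of size $p'$, giving $M_{A(k,p),p'}=\binom{p-2}{p'-2}A_{A(k,p)}$; hence block $B_p$'s contribution at level $p'$ is of order $\binom{p-2}{p'-2}^{4j}\Omega_p^{4j}$, where $\Omega_p:=\max_i\omega_i^{(p)}$.

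To guarantee that block $B_{p'}$ dominates at level $p'$, I would impose the scale hierarchy $\Omega_2\gg\Omega_3\gg\cdots\gg\Omega_r$, with gaps chosen so large that the terms $\binom{p-2}{p'-2}^{4j}\Omega_p^{4j}$ for $p>p'$ are negligible compared to $\Omega_{p'}^{4j}$. Under this hierarchy the level-$p'$ ratio of $G$ converges to $p_j(\yy^{(p')})$ for each $p'$, realizing the full target $(\mathbf{a}_2,\ldots,\mathbf{a}_r)$ in the closure. The main obstacle will be threading two limiting regimes---the intra-block tuning of Alon parameters to approximate each $\yy^{(p)}$, and the inter-block scale separation ensuring each level is governed by its own block---through a single diagonal limit in which all approximation errors simultaneously vanish, while also controlling the cumulative effect of the many non-top eigenvalues provided by Lemma~\ref{lem:brwspectrum} so that they do not overwhelm the intended top-eigenvalue asymptotics when summed across many disjoint components.
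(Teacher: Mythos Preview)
Your approach is essentially the paper's: the same Alon $q$-ifications as building blocks, the same vanishing $\hom(N_{4j,p};A(k,q))=0$ for $q<p$, the same spectral control via Lemma~\ref{lem:brwspectrum}, and the same scale hierarchy with lower-$q$ blocks made larger. The diagonal-limit obstacle you flag at the end is precisely what the paper's one-line device dissolves: setting $k_i^{(q)}=\tfrac{1}{8}\log_2\bigl(y_i^{(q)}N^{r-q}\bigr)$ bakes both the intra-block ratios (via the $y_i^{(q)}$) and the inter-block separation (via the $N^{r-q}$) into a single parameter, so one limit $N\to\infty$ suffices and no separate simplex-density or two-stage argument is needed.
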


\begin{proof}

By Theorem~\ref{thm:necklacessingleq}, we have that $\mathcal{N}_{4\ell,\leq r}^{\hom} \subseteq   \Pi_\ell^{r-1}$.

To prove that $\mathcal{N}_{4\ell,\leq r}^{\hom} \supseteq  \Pi_\ell^{r-1}$, we show that every point in $ \Pi_\ell^{r-1}$ can be realized as  \begin{small}$$\left(\frac{\hom(N_{8,2};G)}{(\hom(N_{4,2};G))^2}, \frac{\hom(N_{12,2};G)}{(\hom(N_{4,2};G))^3}, \ldots, \frac{\hom(N_{4\ell,2};G)}{(\hom(N_{4,2};G)}, \ldots, \frac{\hom(N_{8,r};G)}{(\hom(N_{4,r};G))^2}, \frac{\hom(N_{12,r};G)}{(\hom(N_{4,r};G))^3}, \ldots, \frac{\hom(N_{4\ell,r};G)}{(\hom(N_{4,r};G))^\ell}\right)$$\end{small} for some sequence of graphs $G$.

Consider an arbitrary point in $ \Pi_\ell^{r-1}$, say
$$\left(p_2(\yy^{(2)}), p_3(\yy^{(2)}), \ldots, p_{\ell}(\yy^{(2)}), p_2(\yy^{(3)}), p_3(\yy^{(3)}), \ldots, p_{\ell}(\yy^{(3)})\ldots, p_2(\yy^{(r)}), p_3(\yy^{(r)}), \ldots, p_{\ell}(\yy^{(r)})\right)$$
where $\yy^{(q)}=(y_1^{(q)}, y_2^{(q)}, \ldots, y_{n_q}^{(q)}, 0, 0, \ldots)$ for some $n_q\in \NN$ such that $p_1(\yy^{(q)})=1$ and $y_i^{(q)}\geq 0$ for all $0\leq i \leq n_q$ and $2\leq q \leq r$. Thus, $p_j(\yy^{(q)})=\sum_{i=1}^{n_q} {y_i^{(q)}}^j$ for $2 \leq q \leq r$ and $2\leq j\leq \ell$.

For $2\leq q \leq r$ and $1\leq i \leq n_q$, let $G_i^{(q)}$ be the graph $A(k_i^{(q)},q)$ where $k_i^{(q)}= \frac{1}{8} \log_2 (y_i^{(q)}N^{r-q})$ for some large positive $N$. Fix $2\leq q \leq r$ and $2\leq p\leq r$, and let $\mu_{i, q,p,t}$ be the $t$th largest eigenvalue of $M_{A(k_i^{(q)},q),p}$. Observe that from  Lemma~\ref{lem:countnecklace} and Lemma~\ref{lem:brwspectrum}, $\hom(N_{4j,p};A(k_i^{(q)},q))$ is equal to $$\sum_t \mu_{i,q,p,t}^{4j} = \Theta(2^{2k_i^{(q)}})^{4j}+O(2^{3k_i^{(q)}})\cdot O(2^{k_i^{(q)}})^{4j}+O(2^{5k_i^{(q)}-1})\cdot O(1)^{4j} = (1+o(1))\Theta(2^{2k_i^{(q)}})^{4j}$$ for any $j\geq 1$.  Let $G$ be the disjoint union of the $G_i^{(q)}$ for $i\in [n_q]$ and $2\leq q \leq r$. We then have that 

\begin{align*}
\frac{\hom(N_{4j,p};G)}{\hom(N_{4,p};G)^j}&= \frac{\sum_{q=2}^r \sum_{i=1}^{n_q}  \hom(N_{4j,p};A(k_i^{(q)},q))}{\left(\sum_{q=2}^r \sum_{i=1}^{n_q} \hom(N_{4,p};A(k_i^{(q)},q))\right)^j}\\
&= \frac{\sum_{q=p}^r \sum_{i=1}^{n_q}  \hom(N_{4j,p};A(k_i^{(q)},q))}{\left(\sum_{q=p}^r \sum_{i=1}^{n_q}  \hom(N_{4,p};A(k_i^{(q)},q))\right)^j}\\
&= \frac{\sum_{q=p}^r \sum_{i=1}^{n_q} (1+o(1)){\Theta(2^{2k_i^{(q)}})^{4j}}}{\left(\sum_{q=p}^r \sum_{i=1}^{n_q}  (1+o(1)){\Theta(2^{2k_i^{(q)}})}^{4}\right)^j}\\
&= \frac{\sum_{q=p}^r \sum_{i=1}^{n_q} (1+o(1))(y_i^{(q)}N^{r-q})^j}{\left(\sum_{q=p}^r \sum_{i=1}^{n_q} (1+o(1))(y_i^{(q)}N^{r-q})\right)^j}\\
\end{align*}
where the second line follows from the fact that $\hom(N_{4j,p};A(k_i^{(q)},q))=0$ if $q<p$, where the third line follows from the observation above, and where the fourth line follows follows from the definition of $k_i^{(q)}$. 

The degree of the numerator and denominator is both $j(r-p)$. So as $N\rightarrow \infty$, we get that 

$$\frac{\hom(N_{4j,p};G)}{\hom(N_{4,p};G)^j}\rightarrow \frac{\sum_{i=1}^{n_p} {y_i^{(p)}}^j}{\left( \sum_{i=1}^{n_p} y_i^{(p)}\right)^j}=\frac{p_j(\yy^{(p)})}{(p_1(\yy^{(p)}))^j}=p_j(\yy^{(p)})$$
as desired, and so $$\left(p_2(\yy^{(2)}), p_3(\yy^{(2)}), \ldots, p_{\ell}(\yy^{(2)}), p_2(\yy^{(3)}), p_3(\yy^{(3)}), \ldots, p_{\ell}(\yy^{(3)})\ldots, p_2(\yy^{(r)}), p_3(\yy^{(r)}), \ldots, p_{\ell}(\yy^{(r)})\right)\in \mathcal{N}_{4\ell,\leq r}^{\hom}.$$ 
\end{proof}

\begin{corollary}\label{cor:density}
Recall that $\mathcal{N}_{4\ell, \leq r}$ is the closure of the points of the form \begin{small}$$\left(\frac{t(N_{8,2};G)}{(t(N_{4,2};G))^2}, \frac{t(N_{12,2};G)}{(t(N_{4,2};G))^3}, \ldots, \frac{t(N_{4\ell,2};G)}{(t(N_{4,2};G)}, \ldots, \frac{t(N_{8,r};G)}{(t(N_{4,r};G))^2}, \frac{t(N_{12,r};G)}{(t(N_{4,r};G))^3}, \ldots, \frac{t(N_{4\ell,r};G)}{(t(N_{4,r};G))^\ell}\right)$$\end{small} where $G$  is a graph such that $t(N_{4,q};G)\neq 0$ for any $2 \leq q \leq r$. Then $\mathcal{N}_{4\ell, \leq r}$ is equal to $\Pi_\ell^{r-1}$.
\end{corollary}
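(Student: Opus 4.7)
The plan is to reduce Corollary~\ref{cor:density} to the already-proved Theorem~\ref{thm:necklaces} by showing that, entry by entry, the density ratios and the homomorphism-number ratios coincide. This is the same strategy used in the corollary for cycles, generalized so that the cancellation works simultaneously for every $q$ in the range $2\leq q\leq r$.

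First I would record the combinatorial fact that the $q$-ification operation multiplies the vertex count in a predictable way: since $C_c$ has $c$ vertices and $c$ edges, and the $q$-ification inserts $q-2$ new vertices per edge, we have $|V(N_{c,q})|=c+c(q-2)=c(q-1)$. In particular, $|V(N_{4j,q})|=4j(q-1)=j\cdot|V(N_{4,q})|$. This is the key identity that makes the normalization by $|V(G)|$ cancel after taking ratios.

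Using this, I would write out
\[
\frac{t(N_{4j,q};G)}{t(N_{4,q};G)^j}
=\frac{\hom(N_{4j,q};G)/|V(G)|^{4j(q-1)}}{\hom(N_{4,q};G)^j/|V(G)|^{j\cdot 4(q-1)}}
=\frac{\hom(N_{4j,q};G)}{\hom(N_{4,q};G)^j}
\]
for every $2\leq q\leq r$ and $2\leq j\leq\ell$, and moreover $t(N_{4,q};G)\neq 0$ if and only if $\hom(N_{4,q};G)\neq 0$. Since the two ratios agree coordinate-wise and the sets of graphs over which we take the closure agree, the sets $\mathcal{N}_{4\ell,\leq r}$ and $\mathcal{N}_{4\ell,\leq r}^{\hom}$ are literally equal as subsets of $\mathbb{R}^{(\ell-1)(r-1)}$.

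Finally I would invoke Theorem~\ref{thm:necklaces} to conclude $\mathcal{N}_{4\ell,\leq r}=\mathcal{N}_{4\ell,\leq r}^{\hom}=\Pi_\ell^{r-1}$. There is no real obstacle here; the only thing to check carefully is the vertex count $|V(N_{c,q})|=c(q-1)$ and the resulting matching of exponents of $|V(G)|$ in numerator and denominator, which is what makes the reduction from density ratios to number ratios legitimate uniformly in $q$.
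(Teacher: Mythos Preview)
Your proposal is correct and follows essentially the same argument as the paper: you show coordinate-wise that the density ratio equals the homomorphism-number ratio because the powers of $|V(G)|$ cancel, and then invoke Theorem~\ref{thm:necklaces}. In fact your vertex count $|V(N_{4j,q})|=4j(q-1)$ is the right one (the paper's displayed exponent $4j(q-2)$ appears to be a typo, though the cancellation and conclusion are unaffected).
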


\begin{proof}
We have that $$\frac{t(N_{4j,q};G)}{t(N_{4,q};G)^j}=\frac{\frac{\hom(N_{4j,q};G)}{|V(G)|^{4j(q-2)}}}{\frac{\hom(N_{4,q};G)^j}{(|V(G)|^{4(q-2)})^j}}=\frac{\hom(N_{4j,q};G)}{\hom(N_{4,q};G)^j}.$$ So the density ratio profile $\mathcal{N}_{4\ell, \leq r}$ is equal to the number ratio profile $\mathcal{N}_{4\ell, \leq r}^{\hom}$, and the result holds by Theorem~\ref{thm:necklaces}.
\end{proof}

\begin{remark}\label{rem:lossofdimensionsgeneral}
As mentioned in the introduction, since $\mathcal{C}_{4\ell}$, $\mathcal{N}_{4\ell,q}$ and $\mathcal{N}_{4\ell,\leq r}$ are all \emph{density ratio} profiles, they contain high-dimensional information about the corresponding density profiles. We now discuss what it would take to recover the  density profile from the ratio profile in the most general case  $\mathcal{N}_{4\ell,\leq r}$ (the cases  $\mathcal{C}_{4\ell}$ and $\mathcal{N}_{4\ell,q}$ are discussed in the introduction). Recall that we let $$\mathcal{N}^*_{4\ell,\leq r}:=\textup{cl}\left(\left\{\left(t(N_{4,2};G), t(N_{8,2};G), \ldots,(N_{4\ell,2};G), \ldots t(N_{4,r};G), t(N_{8,r};G), \ldots,(N_{4\ell,r};G)\right) \mid G \textup{ is a graph}\right\}\right)$$ be the usual density profile for all $(4i,q)$-necklaces for $1\leq i \leq \ell$ and $2\leq q \leq r$. To obtain $\mathcal{N}^*_{4\ell, \leq r}$ from $\mathcal{N}_{4\ell, \leq r}$, we need to compute the fiber over any point $(\mathbf{a}_2, \ldots, \mathbf{a}_r)\in \Pi_\ell^{r-1}$. 

Observe that for any point $\mathbf{b}=(b_{4,2},b_{8,2},\ldots, b_{4\ell,2}, \ldots, b_{4,r}, b_{8,r}, \ldots, b_{4\ell,r}) \in \mathcal{N}^*_{4\ell, \leq r}$, we know that for any $0\leq t \leq 1$, the point $\mathbf{b}'$ where $b'_{4i,q}=b_{4i,q}t^{4i(q-1)}$ for $1\leq i\leq \ell$ and $2 \leq q \leq r$ is also in $\mathcal{N}^*_{4\ell, \leq r}$. Indeed, if $G_n$ is the sequence of graphs that realizes $\mathbf{b}$, then $\mathbf{b}'$ can be realized with $G_n$ with some added isolated vertices. Moreover, note that $\mathbf{b}$ and $\mathbf{b}'$ project down to the same $(\mathbf{a}_2, \ldots, \mathbf{a}_r)\in \Pi_\ell^{r-1}$ since $\frac{b_{4i,q}}{b_{4,q}^i} = \frac{b_{4i,q}t^{4i(q-1)}}{(b_{4,q}t^{4(q-1)})^i}=\frac{b'_{4i,q}}{(b'_{4,q})^i}$ for every $2\leq i\leq \ell$, $2 \leq q \leq r$ (if $t>0$). 

So one can partition the points of $\mathcal{N}^*_{4\ell, \leq r}$ so that $\mathbf{b}$ and $\mathbf{b}'$ are in the same part if there exists $t>0$ such that $b'_{4i,q}=b_{4i,q}t^{4i(q-1)}$ for $1\leq i\leq \ell$ and $2 \leq q \leq r$. Then all points in one part will project down to the same point in $\Pi_\ell^{r-1}$. 

We thus recover $\mathcal{N}^*_{4\ell, \leq r}$ up to $r-1$ dimensions, even though the dimension of the profile itself can be arbitrarily large. We emphasize that from \cite{BRW}, we know that determining the validity of a polynomial inequality in the densities of the $N_{4i,q}$'s is an undecidable problem.

\end{remark}


\section{Ratio profile for hyperstars}\label{sec:hyperstars}

Let $S^{(k)}_{b}$ be the $k$-uniform hyperstar with $b$ branches all of which intersect in one vertex, called center, and nowhere else. For example, $S^{(2)}_{b}=K_{1,b}$, i.e., the usual $b$-star graph. Let $\mathcal{S}_{\ell,k}$ be the closure of the points of the form $$\left(\frac{\hom(S^{(k)}_{2k};G)}{(\hom(S^{(k)}_{k};G))^2}, \frac{\hom(S^{(k)}_{3k};G)}{(\hom(S^{(k)}_{k};G))^3}, \ldots, \frac{\hom(S^{(k)}_{\ell k};G)}{(\hom(S^{(k)}_{k};G))^\ell}\right)$$
for all $k$-uniform hypergraphs $G$ such that  $\hom(S^{(k)}_{k};G)\neq 0$. 

\begin{theorem}\label{thm:stars}
We have that $\mathcal{S}_{\ell,k} = \Pi_\ell$.
\end{theorem}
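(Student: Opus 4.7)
The plan is to mirror the proof of Theorem~\ref{thm:cycles}, replacing the eigenvalue identity $\hom(C_j;G)=\sum_i\lambda_i^j$ by an analogous degree identity for hyperstars. The first step will be to establish the formula $\hom(S^{(k)}_b;G)=((k-1)!)^b\sum_{v\in V(G)}d_v^b$, where $d_v$ is the number of hyperedges of $G$ containing $v$. To see this, one chooses an image $v\in V(G)$ for the center, and then, for each of the $b$ branches independently, chooses a hyperedge of $G$ containing $v$ ($d_v$ options) together with a bijection from the $k-1$ leaves of the branch onto the other $k-1$ vertices of that hyperedge ($(k-1)!$ options). The multiplicative factor $((k-1)!)^b$ will drop out of all ratios.

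For the inclusion $\mathcal{S}_{\ell,k}\subseteq\Pi_\ell$, this identity immediately gives
\[
\frac{\hom(S^{(k)}_{jk};G)}{(\hom(S^{(k)}_{k};G))^j}=\frac{\sum_v d_v^{jk}}{\left(\sum_v d_v^k\right)^j}.
\]
Setting $x_v=d_v^k/\sum_w d_w^k$ for $v\in V(G)$ produces $\mathbf{x}\in\RR^{|V(G)|}_{\geq 0}$ with $p_1(\mathbf{x})=1$ and $p_j(\mathbf{x})$ equal to the above ratio, so the point lies in $\Pi_{|V(G)|,\ell}\subseteq\Pi_\ell$.

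For the reverse inclusion $\mathcal{S}_{\ell,k}\supseteq\Pi_\ell$, given $\mathbf{y}=(y_1,\ldots,y_n,0,\ldots)$ with $p_1(\mathbf{y})=1$ and $y_i\geq 0$, I would take $G_N$ to be the disjoint union, over indices $i$ with $y_i>0$, of hyperstars $S^{(k)}_{b_i}$ with $b_i=\lfloor y_i^{1/k}N\rfloor$. Each such component contributes one center vertex of degree $b_i$ and $b_i(k-1)$ leaf vertices of degree $1$, so
\[
\frac{\hom(S^{(k)}_{jk};G_N)}{(\hom(S^{(k)}_{k};G_N))^j}=\frac{\sum_i b_i^{jk}+(k-1)\sum_i b_i}{\left(\sum_i b_i^{k}+(k-1)\sum_i b_i\right)^j}.
\]
The exponent $1/k$ is chosen so that $\sum_i b_i^k=N^k(1+o(1))$, using $p_1(\mathbf{y})=1$, while $\sum_i b_i^{jk}=N^{jk}p_j(\mathbf{y})(1+o(1))$; the $(k-1)\sum_i b_i=O(N)$ terms are lower order. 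Letting $N\to\infty$ gives $p_j(\mathbf{y})$ in the $j$-th coordinate, so the desired point lies in $\mathcal{S}_{\ell,k}$.

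The only nonroutine step is extracting the correct hyperstar homomorphism count; once that identity is in hand, the rest is a direct analogue of the cycle case, with vertex degrees playing the role of adjacency eigenvalues and disjoint hyperstars playing the role of the disjoint complete graphs used in Theorem~\ref{thm:cycles}.
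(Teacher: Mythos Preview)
Your proposal is correct and follows essentially the same approach as the paper's own proof: both use the identity $\hom(S^{(k)}_b;G)=((k-1)!)^b\sum_v d_v^b$, deduce $\mathcal{S}_{\ell,k}\subseteq\Pi_\ell$ by normalizing the $d_v^k$, and realize an arbitrary point of $\Pi_\ell$ via a disjoint union of hyperstars $S^{(k)}_{b_i}$ with $b_i\approx y_i^{1/k}N$. The only cosmetic differences are that you take $b_i=\lfloor y_i^{1/k}N\rfloor$ where the paper writes $b_i=y_i^{1/k}N/(k-1)!$ (both work, since the $(k-1)!$ cancels in the ratio), and you supply a brief justification of the homomorphism-count formula that the paper simply asserts.
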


\begin{proof}

We first show that $\mathcal{S}_{\ell,k} \subseteq \Pi_\ell$. For any $v \in V(G)$, let $d(v)$ be the number of $k$-hyperedges in $G$ that contain vertex $v$. Note that $$\hom(S^{(k)}_{b};G)=\sum_{v \in V(G)} \big((k-1)!d(v)\big)^b.$$ Therefore, we have that $\mathcal{S}_{\ell,k}$ is the closure of the points 
$$\left(\frac{\sum_{v \in V(G)} \big((k-1)!d(v)\big)^{2k}}{\left(\sum_{v \in V(G)} \big((k-1)!d(v)\big)^k\right)^2}, \frac{\sum_{v \in V(G)} \big((k-1)!d(v)\big)^{3k}}{\left(\sum_{v \in V(G)} \big((k-1)!d(v)\big)^k\right)^3}, \ldots, \frac{\sum_{v \in V(G)} \big((k-1)!d(v)\big)^{\ell k}}{\left(\sum_{v \in V(G)} \big((k-1)!d(v)\big)^k\right)^\ell}\right)$$ 
for all $k$-uniform hypergraphs $G$ such that  $\sum_{v \in V(G)} ((k-1)!d(v))^k \neq 0$.

 Fix a graph $G$ on $n$ vertices, say $V(G):=\{v_1, \ldots, v_n\}$. Letting $$x_i=\frac{\big((k-1)! d(v_i)\big)^k}{\sum_{a=1}^n\left(\big((k-1)!d(v_a)\big)^k\right)}$$ for $1\leq i \leq n$ and $x_i=0$ for $i>n$, we have $$p_j(\xx)=\sum_{i=1}^n \left(\frac{\big((k-1)! d(v_i)\big)^k}{\sum_{a=1}^n\left(\big((k-1)!d(v_a)\big)^k\right)} \right)^j=\frac{\hom(S^{(k)}_{jk};G)}{\hom(S^{(k)}_k;G)^j}.$$ Note that $p_1(\xx)=1$ and $x_i\geq 0$ for all $i$, so 

$$\mathcal{S}_{\ell, k}=\left\{(p_2(\xx), p_3(\xx), \ldots, p_\ell(\xx)) \mid x_i=\frac{\big((k-1)! d(v_i)\big)^k}{\sum_{a=1}^n\left(\big((k-1)!d(v_a)\big)^k\right)} \forall i\in[n]\right\}\subseteq \Pi_\ell.$$

We now show that $\mathcal{S}_{\ell, k} \supseteq \Pi_\ell$. Consider an arbitrary point $(p_2(\yy), p_3(\yy), \ldots, p_\ell(\yy))\in \Pi_\ell$ where $$\yy := (y_1, y_2, \ldots, y_n, 0, 0, \ldots)$$ for some $n\in \NN$ such that $p_1(\yy)= 1$ and where $y_i\geq 0$ for every $i$. We show that $(p_2(\yy), p_3(\yy), \ldots, p_\ell(\yy))$ is in $\mathcal{S}_{\ell,k}$. 

For $i\in [n]$, let $G_i$ be the graph $S^{(k)}_{b_i}$ where $b_i=\frac{(y_i)^{\frac{1}{k}}N}{(k-1)!}$ for some large $N$. Observe that $S^{(k)}_{b_i}$ has one vertex with degree $b_i$ and $b_i(k-1)$ vertices with degree 1. Let $G$ be the disjoint union of the $G_i$'s for $i\in [n]$. Then we have

$$\frac{\hom(S^{(k)}_{kj};G)}{\hom(S^{(k)}_k;G)^j} = \frac{\sum_{i=1}^{n} \left(\left((k-1)!\frac{(y_i)^{\frac{1}{k}}N}{(k-1)!}\right)^{jk}+\frac{(y_i)^{\frac{1}{k}}N}{(k-1)!}\cdot (k-1)\cdot \big((k-1)!\cdot 1\big)^{jk}\right)}{\left(\sum_{i=1}^{n} \left(\left((k-1)!\frac{(y_i)^{\frac{1}{k}}N}{(k-1)!}\right)^{k}+\frac{(y_i)^{\frac{1}{k}}N}{(k-1)!}\cdot (k-1)\cdot \big((k-1)!\cdot 1\big)^{k}\right)\right)^j}$$
which, as $N\rightarrow \infty$, goes to

$$\frac{\sum_{i=1}^{n} y_i^j}{\left(\sum_{i=1}^{n} y_i\right)^j} = \frac{p_j(\yy)}{p_1(\yy)^j} = p_j(\yy),$$ and so $(p_2(\yy), p_3(\yy), \ldots, p_\ell(\yy))$ is in  $\mathcal{S}_{\ell,k}$.

\end{proof}

 \bibliographystyle{alpha}
\bibliography{powersumprofilereferences}

\end{document}